\numberwithin{equation}{section}
\theoremstyle{plain}
\newtheorem{theo}{Theorem}[section]
\newtheorem{lem}[theo]{Lemma}
\theoremstyle{definition}
\newtheorem{defi}[theo]{Definition}
\theoremstyle{remark}
\newtheorem{rema}[theo]{Remark}
\title{A criterion of contractivity for $4 \times 4$ upper-triangular matrices}
\author{Axel \textsc{Renard}}
\date{}
\NewDocumentCommand{\cj}{m}{\overline{#1}}
\NewDocumentCommand{\abs}{m}{\left|#1\right|^2}
\NewDocumentCommand{\ps}{m}{\langle #1 \rangle}
\NewDocumentCommand{\id}{}{\ensuremath{\mathrm{Id}}}
\NewDocumentCommand{\oud}{}{\ensuremath{\mathbb{D}}}
\NewDocumentCommand{\cud}{}{\ensuremath{\overline{\mathbb{D}}}}
\NewDocumentCommand{\eg}{}{\textit{e.g. }}
\renewcommand\maketitle
\begin{document}

\maketitle

\begin{abstract}
		\noindent We establish an explicit criterion for determining whether a $4 \times 4$ upper-triangular matrix is a contraction with respect to the Euclidean operator norm. 
\end{abstract}

\section*{Notation}
We start by introducing some notation. In this paper, we will denote by $\oud$ the open unit disk, and by $\mathbb{T}$ the unit circle, \textit{i.e.} $\mathbb{T} = \cud \backslash \oud$. For $n \in \mathbb{N}^*$, we will also denote by $\mathcal{M}_n(\mathbb{C})$ the set of $n \times n$ matrices with complex coefficients, and by $\|\cdot\|$ the Euclidean operator norm on $\mathbb{C}^n$. Throughout this paper, we operate within the framework of Hilbert spaces, considering
$\left(\mathbb{C}^n, \ps{\cdot, \cdot} \right)$ as an $n$-dimensional Hilbert space, where $ \ps{\cdot, \cdot}$ denotes the standard inner product on $\mathbb{C}^n$. Moreover, if $\|T\| \le 1$, we say that $T$ is a contraction. The operator $T$ is a contraction if and only if $\id - T^*T$ is semi-positive definite. Here $\id$ is the identity operator and $T^*$ is the conjugate transpose of the matrix $T$, \textit{i.e.} the (Hilbertian) adjoint of $T$. When $T$ is a contraction, we will denote by $D_T$ its defect operator defined by $D_T=(\id-T^*T)^{1/2}$, where the \emph{square root} denotes the unique semi-positive definite square root. Finally, in this manuscript, $\llbracket N, M \rrbracket$ denotes all the integers from $N$ to $M$.

\section{Introduction}
The calculation of the (Euclidean) operator norm of a general $2\times 2$ matrix $T$ can be explicitly determined using the fact that $\|T\|$ is the largest singular value of $T$. However, computing $\|T\|$ explicitly for matrices $T$ of larger sizes becomes more complex and does not yield very explicit criteria. In \autocite{badeaSchwarzPickTypeInequalities2024b}, the following criterion for a $3 \times 3$ upper-triangular matrix to be a contraction has been established, following the idea of \citeauthor{guptaCaratheodoryFejerInterpolationProblems2015} in \autocite[Lemma 2.7]{guptaCaratheodoryFejerInterpolationProblems2015}:

\begin{theo}\label{gupta}
	Let $\omega_1, \omega_2, \omega_3 \in \mathbb{D}$. Then, $T_3 = \begin{pmatrix}\omega_1 & \alpha_1 & \beta \\ 0 & \omega_2 & \alpha_2 \\ 0 & 0 & \omega_3\end{pmatrix} \in \mathcal{M}_3(\mathbb{C})$ is a contraction if and only if:
	\begin{empheq}[left= \qquad \empheqlbrace]{flalign}
		& \quad |\omega_2| < 1 & \\
		& \quad \left|\alpha_i\right|^2 \leq (1-|\omega_i|^2)(1-|\omega_{i+1}|^2)\label{cond-3-times-3-1}, \: i=1,2 & \\
		& \quad \left|\beta (1-|\omega_2|^2)+\alpha_1\alpha_2\overline{\omega_2}|^2\right| \leq \nonumber & \\
		& \quad \left[(1-|\omega_1|^2)(1-|\omega_2|^2) - |\alpha_1|^2\right] \cdot \left[(1-|\omega_2|^2)(1-|\omega_3|^2) - |\alpha_2|^2 \right]\label{cond-3-times-3-2} &
	\end{empheq}
	\begin{empheq}[left= \text{ or }~ \empheqlbrace]{flalign}
		&\quad |\omega_2| =1 \nonumber & \\
		& \quad \alpha_i =0, \: i=1,2 \nonumber & \\
		& \quad |\beta|^2  \leq (1-|\omega_1|^2)(1-|\omega_3|^2) \nonumber& 
	\end{empheq}
\end{theo}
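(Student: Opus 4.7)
The plan is to characterize contractivity via $M := \id - T_3^* T_3 \succeq 0$, compute $M$ explicitly, and then apply the Schur-complement lemma with respect to its top-left $2 \times 2$ block, treating the boundary $|\omega_2| = 1$ as a separate case. First I would expand
$$M = \begin{pmatrix} A & b \\ b^* & c \end{pmatrix}, \quad A = \id_2 - T_1^* T_1, \quad b = \begin{pmatrix} -\bar{\omega}_1 \beta \\ -\bar{\alpha}_1 \beta - \bar{\omega}_2 \alpha_2 \end{pmatrix}, \quad c = 1 - |\beta|^2 - |\alpha_2|^2 - |\omega_3|^2,$$
with $T_1 = \bigl(\begin{smallmatrix}\omega_1 & \alpha_1 \\ 0 & \omega_2\end{smallmatrix}\bigr)$. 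A direct computation yields $\det A = (1-|\omega_1|^2)(1-|\omega_2|^2) - |\alpha_1|^2$, so $A \succeq 0$ corresponds precisely to the $i=1$ instance of \ref{cond-3-times-3-1} (together with $|\omega_1|,|\omega_2|\le 1$). Reading off the $\{2,3\}$-principal submatrix of $M$ as $\id_2 - T_2^* T_2$ with $T_2 = \bigl(\begin{smallmatrix}\omega_2 & \alpha_2 \\ 0 & \omega_3\end{smallmatrix}\bigr)$ similarly delivers the $i=2$ instance. Both are therefore necessary.

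In the main case $|\omega_2| < 1$, I apply the Schur-complement lemma: $M \succeq 0$ iff $A \succeq 0$ and $c\cdot \det A - b^*\,\operatorname{adj}(A)\,b \geq 0$ (in its generalized form when $\det A = 0$). The key step is a patient but mechanical expansion establishing the algebraic identity
$$(1-|\omega_2|^2)\bigl[c \det A - b^* \operatorname{adj}(A)\, b\bigr] = \det A \cdot \bigl[(1-|\omega_2|^2)(1-|\omega_3|^2)-|\alpha_2|^2\bigr] - \bigl|\beta(1-|\omega_2|^2) + \alpha_1\alpha_2\bar{\omega}_2\bigr|^2.$$
Dividing by $1-|\omega_2|^2 > 0$ rearranges the Schur-complement inequality into exactly condition \ref{cond-3-times-3-2}. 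In the degenerate subcase $\det A = 0$, the identity forces the squared term on the right to vanish, which captures both the range condition $b \in \operatorname{Range}(A)$ and (combined with the $i=2$ instance of \ref{cond-3-times-3-1} already extracted from the $\{2,3\}$-minor) positivity of the residual $c - b^* A^{+} b$, where $A^{+}$ denotes the Moore--Penrose pseudoinverse.

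The boundary case $|\omega_2| = 1$ is handled directly. The $(2,2)$ entry of $M$ reduces to $-|\alpha_1|^2 \leq 0$, so $M \succeq 0$ forces $\alpha_1 = 0$; the $\{2,3\}$-principal minor then forces $\alpha_2 = 0$ by the same argument. With $\alpha_1 = \alpha_2 = 0$ the middle row and column of $M$ vanish, so $M \succeq 0$ reduces to positivity of its $\{1,3\}$-principal minor, i.e.\ $|\beta|^2 \leq (1-|\omega_1|^2)(1-|\omega_3|^2)$, recovering the second list of conditions. The main obstacle is the algebraic identity displayed above: isolating the factor $(1-|\omega_2|^2)$ in $c \det A - b^* \operatorname{adj}(A)\, b$ and recognizing that the residual collapses into the squared quantity $\bigl|\beta(1-|\omega_2|^2) + \alpha_1 \alpha_2 \bar{\omega}_2\bigr|^2$. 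It is this very factor of $1-|\omega_2|^2$ that forces $|\omega_2|=1$ to be treated separately.
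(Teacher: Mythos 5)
Your approach is correct in substance but genuinely different from the paper's. The paper does not prove \Cref{gupta} at all: it imports it from the cited reference, where (as for the $4\times 4$ analogue proved in Section~2) the argument runs through Parrott's completion theorem, writing the matrix as a $2\times 2$ block operator matrix and analysing the minimal solutions $Y_0$, $Z_0$, $W_0$. You instead work directly with positivity of $\id-T_3^*T_3$ via a Schur complement with respect to its leading $2\times 2$ block. I checked your key identity
\begin{equation*}
(1-|\omega_2|^2)\left[c\det A-b^*\operatorname{adj}(A)\,b\right]=\det A\cdot\left[(1-|\omega_2|^2)(1-|\omega_3|^2)-|\alpha_2|^2\right]-\left|\beta(1-|\omega_2|^2)+\alpha_1\alpha_2\cj{\omega_2}\right|^2
\end{equation*}
and it is correct; it does all the work in the main case, and your observation that the extracted factor $1-|\omega_2|^2$ is exactly what forces $|\omega_2|=1$ to be a separate case is right. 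The trade-off: your route is elementary and self-contained (no operator-theoretic machinery, no choice of $Y$, $Z$), whereas the Parrott route is the one that scales to the $4\times 4$ criterion, where a direct Sylvester-type analysis of $\id-T^*T$ becomes unmanageable.

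Two points need repair, neither fatal. First, the $\{2,3\}$-principal submatrix of $\id-T_3^*T_3$ is \emph{not} $\id_2-T_2^*T_2$ with $T_2=\bigl(\begin{smallmatrix}\omega_2&\alpha_2\\0&\omega_3\end{smallmatrix}\bigr)$: its $(1,1)$ entry is $1-|\alpha_1|^2-|\omega_2|^2$, not $1-|\omega_2|^2$, since it equals $\id_2-S^*S$ for $S$ the last two \emph{columns} of $T_3$. To get necessity of the $i=2$ instance of \eqref{cond-3-times-3-1} you should either invoke the compression argument (the compression of a contraction to $\operatorname{span}(e_2,e_3)$ is a contraction --- this is exactly what the paper does in the $4\times4$ proof), or read off the $\{2,3\}$-principal submatrix of $\id-T_3T_3^*$, which \emph{is} $\id_2-T_2T_2^*$ and has determinant $(1-|\omega_2|^2)(1-|\omega_3|^2)-|\alpha_2|^2$. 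Second, in the degenerate subcase $\det A=0$ you assert that the range condition together with the $i=2$ condition yields $c-b^*A^{+}b\ge 0$; this is true but not obvious, and should be computed: under $b\in\operatorname{Range}(A)$ (equivalently $\beta(1-|\omega_2|^2)=-\alpha_1\alpha_2\cj{\omega_2}$) and $|\alpha_1|^2=(1-|\omega_1|^2)(1-|\omega_2|^2)$ one finds $c-b^*A^{+}b=(1-|\omega_3|^2)-|\alpha_2|^2/(1-|\omega_2|^2)$ exactly, so its positivity is precisely the $i=2$ condition. With those two repairs the argument is complete.
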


In this paper, we will establish the following explicit criterion for determining whether a $4 \times 4$ upper-triangular matrix is a contraction:

\begin{theo}\label{critere_4_4_gen}
	Let $T= \begin{pmatrix}
		\omega_1 & \alpha_1 & \beta_1 & \gamma \\
		0 & \omega_2 & \alpha_2 & \beta_2 \\
		0 & 0 & \omega_3 & \alpha_3 \\
		0 & 0 & 0 & \omega_4 
	\end{pmatrix} \in \mathcal{M}_4(\mathbb{C})$, with $\omega_1, \omega_2, \omega_3, \omega_4 \in \cud$.
	
	Then $T$ is a contraction when acting on the Hilbert space $\C^4$ if and only if: 
	
	\begin{empheq}[left= \quad \empheqlbrace]{flalign}
		& |\omega_2| < 1 \quad ; \qquad |\omega_3| <1 \nonumber & \\
		& |\alpha_i|^2 \leq (1-|\omega_i|^2)(1-|\omega_{i+1}|^2), \quad i=1,3 \label{contrac_4_4_premier_ordre_gen} & \\
		& |\alpha_2|^2 < (1-|\omega_2|^2)(1-|\omega_3|^2) \nonumber & \\
		& \begin{multlined}\label{contrac_4_4_deuxieme_ordre_gen}|\beta_i (1-|\omega_{i+1}|^2)+ \alpha_i \alpha_{i+1} \cj{\omega_{i+1}} |^2 \leq \left[(1-|\omega_i|^2)(1-|\omega_{i+1}|^2) - |\alpha_i|^2\right] \times \\ \left[(1-|\omega_{i+1}|^2)(1-|\omega_{i+2}|^2) - |\alpha_{i+1}|^2 \right], \qquad i=1,2 \end{multlined} & \\
		& \begin{multlined}\label{contrac_4_4_troisieme_ordre_gen}
			\left| \gamma [(1-|\omega_2|^2)(1-|\omega_3|^2)-|\alpha_2|^2]+\alpha_1 \beta_2 \cj{\omega_2}(1-|\omega_3|^2) + \alpha_3 \beta_1 \cj{\omega_3}(1-|\omega_2|^2) \right. \\ \left. +\beta_1\beta_2\cj{\alpha_2} + \alpha_1\alpha_2\alpha_3 \cj{\omega_2 \omega_3} \right|^2 (1-|\omega_2|^2)(1-|\omega_3|^2) \\
			\leq \left[ \left( (1-|\omega_1|^2)(1-|\omega_2|^2) - |\alpha_1|^2 \right) \left( (1-|\omega_2|^2)(1-|\omega_3|^2) - |\alpha_2|^2 \right) - \right. \\ \left. \left| \alpha_1 \alpha_2 \cj{\omega_2} + \beta_1 (1-|\omega_2|^2) \right|^2 \right] \\
			\times \left[ \left( (1-|\omega_2|^2)(1-|\omega_3|^2) - |\alpha_2|^2 \right) \left( (1-|\omega_3|^2)(1-|\omega_4|^2) - |\alpha_3|^2 \right) \right. \\ \left.- \left| \alpha_2 \alpha_3 \cj{\omega_3} + \beta_2 (1-|\omega_3|^2) \right|^2 \right]
		\end{multlined} &
	\end{empheq}

	\begin{empheq}[left= \text{or } \empheqlbrace]{flalign}
		& |\omega_2| < 1 \quad ; \quad |\omega_3| <1 \nonumber & \\
		& |\alpha_i|^2 \leq (1-|\omega_i|^2)(1-|\omega_{i+1}|^2), \qquad i=1,3 \label{contrac_4_4_lim_premier_ordre_gen}& \\
		& |\alpha_2|^2 = (1-|\omega_2|^2)(1-|\omega_3|^2)  \nonumber& \\
		& \beta_i = \frac{-\alpha_i \alpha_{i+1} \omega_{i+1}}{1 - |\omega_{i+1}|^2}, \qquad i=1,2 \label{contrac_4_4_lim_deuxieme_ordre_gen}& \\
		& \begin{multlined}\left| \gamma - \frac{\cj{\omega_2} \cj{\omega_3} \alpha_1 \alpha_2 \alpha_3}{(1-|\omega_2|^2)(1-|\omega_3|^2)} \right|^2 \left(1-|\omega_2|^2\right)\left(1-|\omega_3|^2\right) \\ \leq  \left[(1-|\omega_1|^2)(1-|\omega_2|^2) - |\alpha_1|^2 \right] \times \left[ (1-|\omega_3|^2)(1-|\omega_4|^2) - |\alpha_3|^2 \right] \end{multlined}\label{contrac_4_4_lim_troisieme_ordre_gen}&
	\end{empheq}

	or~~$\begin{cases} |\omega_2| = 1 \quad ; \quad |\omega_3| < 1 \\
		\alpha_1 = \alpha_2 = 0 \\
		|\alpha_3|^2 \leq (1-|\omega_3|^2)(1-|\omega_4|^2) \\
		|\beta_1 |^2 \leq (1-|\omega_1|^2)(1-|\omega_3|^2) \\
		\left|\beta_2\left(1-|\omega_3|^2\right) \right|^2 \leq (1-|\omega_3|^2)(1-|\omega_4|^2) - |\alpha_3|^2 \\
		\left|\gamma (1- |\omega_3|^2) +  \alpha_3 \beta_1 \cj{\omega_3} \right|^2 \leq \left[ (1-|\omega_3|^2)(1-|\omega_4|^2) - |\alpha_3|^2\right] \left[(1-|\omega_1|^2)(1-|\omega_3|^2) - |\beta_1|^2 \right]
	\end{cases}$
	
	or~~$\begin{cases} |\omega_3| = 1 \quad ; \quad |\omega_2| < 1 \\
		|\alpha_1|^2 \leq (1-|\omega_1|^2)(1-|\omega_2|^2) \\
		\alpha_2 = \alpha_3= 0 \\
		|\beta_1(1-|\omega_2|^2)|^2 \leq (1-|\omega_1|^2)(1-|\omega_2|^2)-|\alpha_1|^2 \\
		|\beta_2|^2 \leq (1-|\omega_2|^2)(1-|\omega_4|^2) \\
		\left|\gamma (1-|\omega_2|^2) + \alpha_1 \beta_2 \cj{\omega_2} \right|^2 \leq  \left[ (1 - |\omega_1|^2)(1-|\omega_2|^2) - |\alpha_1|^2 \right] \left[(1-|\omega_2|^2) (1 - |\omega_4|^2) - |\beta_2|^2  \right]
	\end{cases}$
	
	or~~$\begin{cases} |\omega_2| = |\omega_3| = 1 \\
		\alpha_1 = \alpha_2 = \alpha_3 = 0 \\
		|\beta_i|^2 \leq (1-|\omega_i|^2)(1-|\omega_{i+2}|^2), \qquad i=1,2 \\
		\left|\gamma \right|^2 \leq (1-|\omega_1|^2)(1-|\omega_4|^2)
	\end{cases}$
\end{theo}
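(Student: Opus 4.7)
The proof uses the block $2\times 2$ decomposition
\[
T = \begin{pmatrix} A & B \\ 0 & C \end{pmatrix},\quad A = \begin{pmatrix}\omega_1 & \alpha_1 \\ 0 & \omega_2\end{pmatrix},\quad B = \begin{pmatrix}\beta_1 & \gamma \\ \alpha_2 & \beta_2\end{pmatrix},\quad C = \begin{pmatrix}\omega_3 & \alpha_3 \\ 0 & \omega_4\end{pmatrix},
\]
together with the classical Parrott--Arsene--Gheondea characterization of contractive completions of block-triangular operator matrices: $T$ is a contraction if and only if $A$ and $C$ are contractions and there exists a contraction $\Gamma \in \mathcal{M}_2(\mathbb{C})$ with $B = D_{A^*}\Gamma D_C$; equivalently, if and only if $A$ and $C$ are contractions and the $4\times 4$ Hermitian block matrix
\[
\mathcal{P} := \begin{pmatrix} D_{A^*}^2 & B \\ B^* & D_C^2 \end{pmatrix}
\]
is positive semi-definite. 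The $2\times 2$ case gives that $A$ and $C$ being contractions translates into the conditions of \eqref{contrac_4_4_premier_ordre_gen} at $i=1,3$; the third condition in \eqref{contrac_4_4_premier_ordre_gen}, $|\alpha_2|^2 \leq (1-|\omega_2|^2)(1-|\omega_3|^2)$, falls out as the $(2,3)$-principal $2\times 2$ minor of $\mathcal{P}$.

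Next I compute $D_{A^*}^2 = I - AA^*$ and $D_C^2 = I - C^*C$ explicitly; these are Hermitian $2\times 2$ matrices with determinants $(1-|\omega_1|^2)(1-|\omega_2|^2)-|\alpha_1|^2$ and $(1-|\omega_3|^2)(1-|\omega_4|^2)-|\alpha_3|^2$ respectively. In the \emph{generic main case}, where $|\omega_j|<1$ for all $j$, both of these determinants are strictly positive, and $|\alpha_2|^2 < (1-|\omega_2|^2)(1-|\omega_3|^2)$, the defect matrices $D_{A^*}^2$ and $D_C^2$ are invertible. One Schur-complement step then reduces $\mathcal{P}\geq 0$ to the $2\times 2$ Hermitian condition $R := D_C^2 - B^*(D_{A^*}^2)^{-1}B \geq 0$, which in turn is equivalent to $R_{11} \geq 0$, $R_{22} \geq 0$, and $\det R \geq 0$.

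The algebraic core of the proof is the translation of these scalar inequalities into the form of the theorem. A direct computation of $b_1^*(D_{A^*}^2)^{-1}b_1$ with $b_1 = (\beta_1,\alpha_2)^T$, after multiplying through by the positive scalar $(1-|\omega_2|^2)\det(D_{A^*}^2)$ and grouping terms around $\beta_1(1-|\omega_2|^2)+\alpha_1\alpha_2\overline{\omega_2}$---the combination familiar from Theorem~\ref{gupta}---shows that $R_{11}\geq 0$ is equivalent to \eqref{contrac_4_4_deuxieme_ordre_gen} at $i=1$; conceptually, $R_{11}\geq 0$ encodes the contractivity of the upper-left $3\times 3$ principal submatrix of $T$ via the Schur complement applied inside that submatrix. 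The corresponding statement at $i=2$ comes out identically from the dual Schur complement $R' := D_{A^*}^2 - B(D_C^2)^{-1}B^* \geq 0$ applied to its $(2,2)$ entry (noting that $\det R$ and $\det R'$ differ only by a strictly positive factor when both defect matrices are invertible). Finally, an expansion of $\det R$, regrouped around the canonical expressions $\alpha_i\alpha_{i+1}\overline{\omega_{i+1}}+\beta_i(1-|\omega_{i+1}|^2)$ already present in \eqref{contrac_4_4_deuxieme_ordre_gen} and around the long $\gamma$-expression on the left of \eqref{contrac_4_4_troisieme_ordre_gen}, produces precisely that inequality.

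The degenerate branches of the theorem correspond exactly to the loci where one of $D_{A^*}^2, D_C^2$ becomes singular or where $|\alpha_2|^2 = (1-|\omega_2|^2)(1-|\omega_3|^2)$. In the limit case $|\alpha_2|^2 = (1-|\omega_2|^2)(1-|\omega_3|^2)$, the corresponding $2\times 2$ principal minor of $\mathcal{P}$ vanishes, forcing certain columns of $\mathcal{P}$ to become proportional and pinning down $\beta_1,\beta_2$ as in \eqref{contrac_4_4_lim_deuxieme_ordre_gen}; the remaining $\gamma$-condition then collapses to \eqref{contrac_4_4_lim_troisieme_ordre_gen}. The case $|\omega_2|=1$ forces the second row and column of $D_{A^*}^2$ to vanish, and positivity of $\mathcal{P}$ then forces $\alpha_1=\alpha_2=0$; the completion problem reduces to a Parrott-type analysis of a $3\times 3$ block matrix in the variables $\omega_1,\omega_3,\omega_4,\beta_1,\alpha_3,\beta_2,\gamma$ and yields the listed conditions. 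The case $|\omega_3|=1$ is symmetric, and $|\omega_2|=|\omega_3|=1$ degenerates further to a simple $2\times 2$ analysis with no cross interactions. \emph{The main obstacle} is the determinant expansion in the third paragraph: the right-hand side of \eqref{contrac_4_4_troisieme_ordre_gen} only emerges as a clean product of two nonnegative factors after substantial cancellation, so my plan is to treat the two $3\times 3$ quantities (the $R_{11}$ and $R'_{22}$ expressions, already controlled in the previous step) as atomic abbreviations and expand $\det R$ in terms of them, rather than attempt a raw expansion in the original entries of $T$.
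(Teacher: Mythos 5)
Your route is genuinely different from the paper's: the paper applies Parrott's theorem to the decomposition that isolates the corner entry $\gamma$ (a $1\times 3$ row $A$ against a $3\times 3$ block $C$), first proves the statement when $\omega_3=0$ (\Cref{crit-4-4-0}), and then transports the general case to that one through the operator M\"obius map $M_{\omega_3}$ and divided-difference computations; you instead apply Parrott to the $2\times 2$-block splitting with vanishing lower-left corner and convert $B=D_{A^*}\Gamma D_C$ into positive semi-definiteness of the Hermitian matrix $\mathcal{P}$, to be analysed by Schur complements. That framework is correct, and where you carried it out the translation works: $R_{11}\ge 0$ is indeed \eqref{contrac_4_4_deuxieme_ordre_gen} for $i=1$ after clearing the positive factor $(1-|\omega_2|^2)\det D_{A^*}^2$. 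Two logical caveats, though: positivity of the $2\times 2$ Hermitian $R$ requires $R_{22}\ge 0$, which involves $\gamma$ and is not among the theorem's conditions --- it follows from $R_{11}>0$ together with $\det R\ge 0$, but needs a separate word when $R_{11}=0$; and the identity turning $\det R\ge 0$ into \eqref{contrac_4_4_troisieme_ordre_gen}, which is the entire computational heart of the theorem, is still only a plan.

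The genuine gap is the treatment of singular defect blocks. The assertion that $|\omega_j|<1$ for all $j$ makes $\det D_{A^*}^2$ and $\det D_C^2$ strictly positive is false (take $\omega_1=\omega_2=0$, $\alpha_1=1$), and the first branch of the theorem explicitly allows $|\omega_1|=1$, $|\omega_4|=1$ and equality in \eqref{contrac_4_4_premier_ordre_gen}, all of which make $D_{A^*}^2$ or $D_C^2$ singular. These cases lie \emph{inside} the main branch, not among the degenerate branches (which are indexed by $|\alpha_2|^2$ attaining its bound and by $|\omega_2|$ or $|\omega_3|$ being unimodular), so your case division does not match the theorem's, and the plain Schur-complement step is unavailable exactly where the main branch touches its boundary. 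You would need the generalized Schur complement with its range-inclusion condition, or a perturbation-and-limit argument, plus a verification that the limiting conditions coincide with the stated inequalities there. Relatedly, in the branch $|\omega_2|=1$ your own mechanism produces more than you claim: the $(2,2)$ entry of $\mathcal{P}$ vanishes, so positivity annihilates its entire second row $(-\cj{\alpha_1}\omega_2,\,0,\,\alpha_2,\,\beta_2)$, forcing $\beta_2=0$ in addition to $\alpha_1=\alpha_2=0$ (as it must, since the second row of $T$ has Euclidean norm at least $|\omega_2|$). That is strictly stronger than the inequality on $\beta_2$ listed in that branch of the statement, so ``yields the listed conditions'' cannot be waved through: you will have to confront this discrepancy (and its mirror image, $\beta_1=0$ when $|\omega_3|=1$) explicitly rather than assume your conditions reorganize into the displayed ones.
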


In order to do this, we will follow the same approach as for $3 \times 3$ matrices, employing a slightly more precise version of Parrott's theorem on matrix completion (see \autocite{parrottQuotientNormSzNagyth1978, foiasCommutantLiftingApproach1990a}, \autocite[Theorem 12.22]{youngIntroductionHilbertSpace1988b} or \autocite{familyarseneCompletingMatrixContractions1982, davisNormPreservingDilationsTheir1982b} for the usual version of the theorem, and \autocite[Appendix]{badeaSchwarzPickTypeInequalities2024b} for a revisited proof of the theorem):
\begin{theo}[Parrott]\label{parrott}
	Let $H_1, H_2, K_1, K_2$ be Hilbert spaces, and assume that the operators $\begin{bmatrix}
		A \\ C
	\end{bmatrix}\in\mathcal{B}(H_1,K_1 \oplus K_2)$ and $\begin{bmatrix}
		C & D
	\end{bmatrix} \in \mathcal{B}(H_1 \oplus H_2,K_2)$ are contractions.
	
	Then, \[T= \begin{bmatrix}
		A & B \\ C & D
	\end{bmatrix} : H_1  \oplus H_2 \to K_1 \oplus K_2\] is a contraction if and only if there exists a contraction $W \in \mathcal{B}(H_2,K_1)$ such that: 
	\[
	B=D_{Z^*}WD_Y - Z C^*Y,
	\] where $Z \in \mathcal{B}(H_1, K_1)$ and $Y \in \mathcal{B}(H_2,K_2)$ are contractions such that $D=D_{C^*}Y$ and $A=ZD_C$. 
	
	Moreover, 
	\begin{enumerate}
		\item $Y$ and $Z$ can be chosen to be (respectively) $Y_0$ and $Z_0$, the solutions of minimal operator norm among all solutions of the operator equations $D=D_{C^*}Y$ and  $A=ZD_C$;
		\item If $T$ is a contraction, there exists a unique contraction $W_0$ such that: 
		\[
		B=D_{Z_0^*}W_0D_{Y_0} - Z_0 C^*Y_0 \; \text{ and } \; \text{Im}\left(D_{Z_0^*}\right)^{\perp} \subset \text{Ker}(W_0^*). 
		\]
		This operator satisfies: 
		\[ 
		\|W_0\| = \inf\{ \, \|W\| \, : \, B=D_{Z_0^*}WD_{Y_0} - Z_0 C^*Y_0 \}. 
		\]
	\end{enumerate} 
	We shall call $Y_0$ and $Z_0$ the \emph{minimal solutions} and we shall refer to $W_0$ as the \emph{minimal solution} of the equation 
	\[
	B=D_{Z_0^*}WD_{Y_0} - Z_0 C^*Y_0.
	\]
\end{theo}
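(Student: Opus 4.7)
The plan is to apply Parrott's theorem (Theorem~\ref{parrott}) to a carefully chosen block decomposition of $T$ and then unfold the resulting inequalities, following the strategy that worked for Theorem~\ref{gupta}.

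The central step is to split off the last row and column: write
\[
T = \begin{pmatrix} T_3 & c \\ 0 & \omega_4 \end{pmatrix}, \qquad T_3 = \begin{pmatrix} \omega_1 & \alpha_1 & \beta_1 \\ 0 & \omega_2 & \alpha_2 \\ 0 & 0 & \omega_3 \end{pmatrix}, \qquad c = \begin{pmatrix}\gamma \\ \beta_2 \\ \alpha_3\end{pmatrix}.
\]
Since the lower-left block is $0$, Parrott's theorem collapses to the statement: $T$ is a contraction if and only if $T_3$ is a contraction, $|\omega_4| \le 1$, and $c = D_{T_3^{*}} W \sqrt{1-|\omega_4|^2}$ for some contraction $W \in \mathbb{C}^3$. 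Applying Theorem~\ref{gupta} to $T_3$ then immediately yields $|\omega_2| < 1$, condition \eqref{contrac_4_4_premier_ordre_gen} for $i=1$, the bound $|\alpha_2|^2 \le (1-|\omega_2|^2)(1-|\omega_3|^2)$, and condition \eqref{contrac_4_4_deuxieme_ordre_gen} for $i=1$.

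In the generic case where $|\omega_2|,|\omega_3| < 1$ and $|\alpha_2|^2 < (1-|\omega_2|^2)(1-|\omega_3|^2)$, $T_3$ is a strict contraction, $D_{T_3^{*}}$ is invertible, and the condition on $c$ becomes the single scalar inequality $c^{*} (I - T_3 T_3^{*})^{-1} c \le 1 - |\omega_4|^2$. To turn this into explicit bounds I would compute $(I - T_3 T_3^{*})^{-1}$ by block inversion, writing $T_3 = \begin{pmatrix} A_2 & b \\ 0 & \omega_3\end{pmatrix}$ with $A_2 = \begin{pmatrix}\omega_1 & \alpha_1 \\ 0 & \omega_2\end{pmatrix}$ and $b=(\beta_1,\alpha_2)^T$, and expressing the $3\times 3$ inverse in terms of the $2\times 2$ inverse $(I-A_2 A_2^{*})^{-1}$ already used in the $3\times 3$ proof, together with an appropriate Schur complement. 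I would then complete the square in the resulting scalar quadratic form, first in $\gamma$ and then in $\beta_2$: the square in $\gamma$ yields precisely the left-hand side of \eqref{contrac_4_4_troisieme_ordre_gen}; the non-negativity of the residual quadratic in $(\beta_2,\alpha_3)$, after a further completion in $\beta_2$, yields \eqref{contrac_4_4_deuxieme_ordre_gen} for $i=2$; and the non-negativity of the final residual in $\alpha_3$ yields \eqref{contrac_4_4_premier_ordre_gen} for $i=3$. This hierarchical unfolding mirrors the $3\times 3$ proof, in which $b^{*}(I-A_2 A_2^{*})^{-1} b \le 1-|\omega_3|^2$ unfolds into \eqref{cond-3-times-3-2} together with the $\alpha_2$ bound.

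The degenerate cases are then handled via the minimal-solution refinements (i)--(ii) of Theorem~\ref{parrott}. When $|\alpha_2|^2 = (1-|\omega_2|^2)(1-|\omega_3|^2)$ while $|\omega_2|,|\omega_3|<1$, $T_3$ ceases to be a strict contraction, the defect operators drop rank, and the minimal solutions $Y_0, Z_0$ rigidly prescribe $\beta_1$ and $\beta_2$ as in \eqref{contrac_4_4_lim_deuxieme_ordre_gen}, leaving only the $\gamma$-inequality \eqref{contrac_4_4_lim_troisieme_ordre_gen}. When $|\omega_2|=1$, respectively $|\omega_3|=1$, or both, Theorem~\ref{gupta} forces $\alpha_1=\alpha_2=0$, respectively $\alpha_2=\alpha_3=0$, or all three, so that the row/column bearing the unimodular diagonal entry decouples; contractivity of $T$ then reduces to contractivity of a smaller upper-triangular matrix, to which either Theorem~\ref{gupta} or the elementary $2\times 2$ criterion applies and produces the remaining stated cases. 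The main obstacle throughout is the lengthy but structured algebra linking the compact form $c^{*}(I-T_3 T_3^{*})^{-1} c \le 1-|\omega_4|^2$ to the explicit nested inequalities; the recurrent tool that makes everything line up is the ``completion of squares'' identity
\[
(1-|\omega|^2)\,|x|^2 + 2\operatorname{Re}\!\bigl(\bar x\,yz\bar\omega\bigr) + \frac{|yz|^2|\omega|^2}{1-|\omega|^2} \;=\; \frac{\bigl|x(1-|\omega|^2)+yz\bar\omega\bigr|^2}{1-|\omega|^2},
\]
which is what collapses the cross-terms produced by $T_3 T_3^{*}$ into the squared-modulus shape appearing in \eqref{contrac_4_4_deuxieme_ordre_gen} and \eqref{contrac_4_4_troisieme_ordre_gen}.
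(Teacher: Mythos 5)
Your proposal does not prove the statement it is supposed to prove. The statement in question is Parrott's completion theorem itself: the equivalence between contractivity of $T=\begin{bmatrix} A & B\\ C& D\end{bmatrix}$ and the existence of a contraction $W$ with $B=D_{Z^*}WD_Y-ZC^*Y$, together with the two refinements about the minimal solutions $Y_0$, $Z_0$ and the unique minimal $W_0$ satisfying $\mathrm{Im}(D_{Z_0^*})^{\perp}\subset\mathrm{Ker}(W_0^*)$. Your text instead \emph{invokes} Theorem~\ref{parrott} as a tool and sketches a proof of the $4\times 4$ criterion, Theorem~\ref{critere_4_4_gen}. As an argument for the assigned statement this is circular: you assume exactly what is to be established, and you offer no argument for either direction of the equivalence (e.g.\ the existence of $Y$ and $Z$ from the column/row contractivity hypotheses, the construction of $W$ from a contraction $T$, or the converse verification that the displayed $B$ makes $T$ contractive), nor for parts (i) and (ii), which are the delicate uniqueness and norm-minimality assertions. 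Note that the paper itself does not reprove Parrott's theorem; it cites the original sources and a revisited proof elsewhere, so the content genuinely requiring proof here is precisely the material your proposal takes for granted.

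If your sketch were instead graded as a proof of Theorem~\ref{critere_4_4_gen}, it would still diverge substantially from the paper's route: the paper peels off the \emph{first} row (taking $A=\begin{bmatrix}\omega_1 & \alpha_1 & \beta_1\end{bmatrix}$, $B=[\gamma]$, $C$ the lower-left $3\times 3$ block and $D$ the last column), first settles the case $\omega_3=0$ (Lemma~\ref{crit-4-4-0}) via explicit computation of $(\id-CC^*)^{-1/2}$, $(\id-C^*C)^{-1/2}$ and a Cholesky factorization, and then transports the general case to this one by the operator M\"obius transformation $M_{\omega_3}$ and divided differences. Your plan of splitting off the last row and column reduces everything to the scalar inequality $c^*(I-T_3T_3^*)^{-1}c\le 1-|\omega_4|^2$, which is plausible in the generic case but leaves the degenerate cases ($|\alpha_2|^2=(1-|\omega_2|^2)(1-|\omega_3|^2)$, or $|\omega_2|=1$ or $|\omega_3|=1$) only gestured at; those are exactly the cases where the minimal-solution clauses of Parrott's theorem do real work and where the paper spends most of its effort. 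In any event, this does not repair the central defect: no proof of Theorem~\ref{parrott} has been given.
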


The properties of the minimal solutions will be crucial in the proof of \Cref{critere_4_4_gen}.

\section{Proof of \Cref{critere_4_4_gen}}

First of all, we will need the following technical lemma, which provides the diagonalization of a specific matrix $M$. This will allow us to define powers $M^s$ of $M$.

\begin{lem}\label{diag_mat}
	Let $\omega, \alpha \in \mathbb{C}$, $\omega \neq 0$, $M=\begin{pmatrix}
		1-|\omega|^2 & -\overline{\omega}\alpha \\
		- \overline{\alpha}\omega & 1-|\alpha|^2
	\end{pmatrix}$ and $s \in \mathbb{R}$.
	\begin{enumerate}
		\item We have $M=\frac{1}{|\alpha|^2 +|\omega|^2}\begin{pmatrix}
			-\alpha & \overline{\omega} \\
			\omega & \overline{\alpha}
		\end{pmatrix} 
		\begin{pmatrix}
			1 & 0 \\
			0 & 1-|\alpha|^2 - |\omega|^2 
		\end{pmatrix}
		\begin{pmatrix}
			-\overline{\alpha} & \overline{\omega} \\
			\omega & \alpha
		\end{pmatrix}$;
		\item We can define \[M^{s} = \frac{1}{\abs{\alpha}+\abs{\omega}} \begin{pmatrix}
			\abs{\alpha}+\abs{\omega}\left[1 - |\alpha|^2 - |\omega|^2\right]^{s} & -\alpha \cj{\omega}\left(1 - \left[1 - |\alpha|^2 - |\omega|^2\right]^s \right) \\
			-\cj{\alpha}\omega\left(1 - \left[1 - |\alpha|^2 - |\omega|^2\right]^s \right) & |\omega|^2 + |\alpha|^2 \left[1- |\alpha|^2 - |\omega|^2 \right]^s 
		\end{pmatrix},\] provided $\left[1 - |\alpha|^2 - |\omega|^2\right]^s$ is well-defined. 
	\end{enumerate}
\end{lem}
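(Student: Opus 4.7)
The strategy is to diagonalize $M$ explicitly; once this is done, part (i) is simply a rewriting of the diagonalization, and part (ii) follows by raising the diagonal factor to the power $s$ and multiplying back out.

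First, I would compute the characteristic data of $M$: a direct calculation gives $\mathrm{tr}(M) = 2 - |\omega|^2 - |\alpha|^2$ and
\[ \det M = (1-|\omega|^2)(1-|\alpha|^2) - |\alpha|^2 |\omega|^2 = 1 - |\alpha|^2 - |\omega|^2, \]
so the two eigenvalues of $M$ are $\lambda_1 = 1$ and $\lambda_2 = 1 - |\alpha|^2 - |\omega|^2$. Next, I would verify directly that $\bigl(-\alpha,\, \omega\bigr)^{\mathrm{T}}$ is an eigenvector for $\lambda_1$ and $\bigl(\cj{\omega},\, \cj{\alpha}\bigr)^{\mathrm{T}}$ is an eigenvector for $\lambda_2$ (each check takes a single line). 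The hypothesis $\omega \neq 0$ guarantees these vectors are nonzero, and the matrix
\[ P := \begin{pmatrix} -\alpha & \cj{\omega} \\ \omega & \cj{\alpha} \end{pmatrix} \]
has determinant $-(|\alpha|^2 + |\omega|^2) \neq 0$, hence is invertible, with $P^{-1}$ given by the standard $2\times 2$ cofactor formula. The factorization in (i) is then obtained by writing $M = P\,\mathrm{diag}(1,\, 1-|\alpha|^2-|\omega|^2)\,P^{-1}$ and substituting the explicit form of $P^{-1}$; this matches the product displayed in the statement after absorbing the factor $1/(|\alpha|^2 + |\omega|^2)$ coming from $P^{-1}$.

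For (ii), once (i) is established, the diagonalization legitimates setting $M^s := P\,\mathrm{diag}(1,\, (1-|\alpha|^2-|\omega|^2)^s)\,P^{-1}$ whenever the scalar power $(1-|\alpha|^2-|\omega|^2)^s$ is well-defined. Carrying out this triple product by hand and collecting the entries yields the explicit matrix claimed, using only the identity $1 - \bigl[1-|\alpha|^2-|\omega|^2\bigr]^s$ to group the off-diagonal terms and $|\alpha|^2 + |\omega|^2\bigl[1 - |\alpha|^2 - |\omega|^2\bigr]^s$ to assemble the diagonal terms. There is no conceptual obstacle: the entire lemma reduces to guessing the correct eigenvectors and performing a short $2\times 2$ matrix multiplication, the main work being bookkeeping of conjugates.
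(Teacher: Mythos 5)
Your proposal is correct and matches the paper, which simply states that the lemma ``follows from a straightforward computation'': the eigenvalues $1$ and $1-|\alpha|^2-|\omega|^2$, the eigenvectors $(-\alpha,\omega)^{\mathrm T}$ and $(\cj{\omega},\cj{\alpha})^{\mathrm T}$, and the cofactor formula for $P^{-1}$ all check out, and the triple product reproduces both displayed matrices. (One could add that $P/\sqrt{|\alpha|^2+|\omega|^2}$ is in fact unitary, so for $s=1/2$ and $M$ semi-positive definite this $M^s$ is the unique semi-positive square root used elsewhere in the paper, but the statement as written does not require this.)
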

\begin{proof}
This follows from a straightforward computation.
\end{proof}

Now, let us start with the particular case where one of the diagonal element of the matrix, $\omega_3$, is equal to $0$:

\begin{lem}\label{crit-4-4-0}
	Let $T= \begin{pmatrix}
		\omega_1 & \alpha_1 & \beta_1 & \gamma \\
		0 & \omega_2 & \alpha_2 & \beta_2 \\
		0 & 0 & 0 & \alpha_3 \\
		0 & 0 & 0 & \omega_4 
	\end{pmatrix} \in \mathcal{M}_4(\mathbb{C})$, with 
	$\omega_2 \in \oud$ and $\omega_1, \omega_4 \in \cud$.
	
	Then $T$ is a contraction if and only if:
	
	\begin{empheq}[left= \quad \empheqlbrace]{flalign}
		& |\alpha_i|^2 \leq (1-|\omega_i|^2)(1-|\omega_{i+1}|^2), \quad i=1,3 \label{contrac_4_4_lim_cond_prem_ordre} & \\
		&|\alpha_2|^2 = 1-|\omega_2|^2 \nonumber &\\
		&	\beta_1 = \frac{- \alpha_1 \alpha_2 \cj{\omega_2}}{1-|\omega_2|^2} \label{contrac_4_4_lim_cond_deuxieme_ordre_b_1}& \\
		&	\beta_2 =0 \label{contrac_4_4_lim_cond_deuxieme_ordre_b_2} &\\
		&	|\gamma |^2 (1-|\omega_2|^2) \leq  \left[(1- |\omega_1|^2)(1-|\omega_2|^2) - |\alpha_1|^2\right] \left[1-|\omega_4|^2 -|\alpha_3|^2\right] \label{contrac_4_4_lim_cond_trois_ordre}&
	\end{empheq}
	
		
		\begin{empheq}[left= \text{or } \empheqlbrace]{flalign}
			&|\alpha_i|^2 \leq (1-|\omega_i|^2)(1-|\omega_{i+1}|^2), \quad i=1,3 \label{contrac_4_4_premier_ordre} & \\
			&|\alpha_2|^2 < 1-|\omega_2|^2 \nonumber& \\
			&\begin{multlined}
				|\beta_i (1-|\omega_{i+1}|^2)+ \alpha_i \alpha_{i+1} \cj{\omega_{i+1}} |^2 \leq \left[(1-|\omega_i|^2)(1-|\omega_{i+1}|^2) - |\alpha_i|^2\right] \times \\  \left[(1-|\omega_{i+1}|^2)(1-|\omega_{i+2}|^2) - |\alpha_{i+1}|^2 \right], \quad i=1,2 
			\end{multlined}  \label{contrac_4_4_deuxieme_ordre} &\\
			&	\begin{multlined} \left| \gamma \left(1- |\omega_2|^2 - |\alpha_2|^2 \right) + \beta_2 \left(\cj{\omega_2} \alpha_1 + \cj{\alpha_2} \beta_1 \right) \right|^2 (1-|\omega_2|^2)  \\ \leq \left[ \left( (1-|\omega_1|^2)(1-|\omega_2|^2)-|\alpha_1|^2\right) \left(1 - |\omega_2|^2 - |\alpha_2|^2\right) \right. \\ \left. - \left| \beta_1 (1-|\omega_2|^2) + \alpha_1 \alpha_2 \cj{\omega_2} \right|^2 \right] \\ \times \left[ \left(1- |\omega_2|^2 - |\alpha_2|^2\right) \left(1 - |\omega_4|^2 - |\alpha_3|^2  \right) - |\beta_2|^2  \right] \end{multlined} \label{contrac_4_4_troisieme_ordre} &
		\end{empheq}
	\end{lem}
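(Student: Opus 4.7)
The plan is to apply \Cref{parrott} to the $2 \times 2$ block decomposition
\[ T = \begin{pmatrix} T_3 & v \\ 0 & \omega_4 \end{pmatrix}, \]
where $T_3$ denotes the top-left $3\times 3$ submatrix of $T$ (upper-triangular with diagonal $\omega_1, \omega_2, 0$) and $v = (\gamma, \beta_2, \alpha_3)^{\mathrm{T}}$. Since the lower-left block vanishes, the minimal solutions in \Cref{parrott} are simply $Z = T_3$ and $Y = \omega_4$, and the theorem gives that $T$ is a contraction if and only if $T_3$ is a contraction, $|\omega_4|\le 1$, and there exists a contraction $w \in \mathbb{C}^3$ with $v = \sqrt{1-|\omega_4|^2}\, D_{T_3^*}\, w$. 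By Douglas' factorization, this last condition is equivalent to
\[ vv^* \le (1-|\omega_4|^2)\,(I - T_3 T_3^*). \]

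To handle the contractivity of $T_3$ I would invoke \Cref{gupta}: since $|\omega_2| < 1$ and the $(3,3)$-entry of $T_3$ is $0$, only the first (non-degenerate) case of that theorem applies, producing the first-order conditions on $\alpha_1, \alpha_2$ and the second-order inequality \eqref{contrac_4_4_deuxieme_ordre} at $i=1$. A direct computation using the fact that the last row of $T_3$ vanishes gives
\[ I - T_3 T_3^* = \begin{pmatrix} M & 0 \\ 0 & 1 \end{pmatrix}, \]
where $M$ is the Hermitian $2\times 2$ matrix with $M_{1,1} = 1-|\omega_1|^2-|\alpha_1|^2-|\beta_1|^2$, $M_{2,2} = 1-|\omega_2|^2-|\alpha_2|^2$, and $M_{1,2} = -(\alpha_1\cj{\omega_2} + \beta_1\cj{\alpha_2})$; a key identity used throughout the matching is
\[ (1-|\omega_2|^2)\det M = \bigl[(1-|\omega_1|^2)(1-|\omega_2|^2) - |\alpha_1|^2\bigr] M_{2,2} - \bigl|\beta_1(1-|\omega_2|^2) + \alpha_1\alpha_2\cj{\omega_2}\bigr|^2, \]
which shows that Gupta's second-order condition on $T_3$ is equivalent to $\det M \ge 0$.

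A Schur complement on the $(3,3)$-entry of $(1-|\omega_4|^2)(I - T_3 T_3^*) - vv^*$ then splits the $3\times 3$ positivity statement into the scalar $|\alpha_3|^2 \le 1-|\omega_4|^2$ (the first-order condition for $i=3$) and the $2\times 2$ inequality
\[ (1-|\omega_4|^2-|\alpha_3|^2)\,M \ge v_a v_a^*, \qquad v_a = (\gamma, \beta_2)^{\mathrm{T}}. \]
The dichotomy in the statement is governed by the value of $p := M_{2,2}$. If $p = 0$, \ie $|\alpha_2|^2 = 1-|\omega_2|^2$, positivity of $M$ forces its off-diagonal entry to vanish, which pins down $\beta_1 = -\alpha_1\alpha_2\cj{\omega_2}/(1-|\omega_2|^2)$; the $2\times 2$ inequality above then forces $\beta_2 = 0$ and collapses to the scalar condition \eqref{contrac_4_4_lim_cond_trois_ordre}. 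If $p > 0$, the $2\times 2$ inequality decomposes into its $(2,2)$ principal minor, which is exactly \eqref{contrac_4_4_deuxieme_ordre} at $i=2$, together with the determinantal condition $\det\!\bigl((1-|\omega_4|^2-|\alpha_3|^2)M - v_a v_a^*\bigr) \ge 0$.

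The main obstacle will be the algebraic identification of this determinantal condition with \eqref{contrac_4_4_troisieme_ordre}: expanding and grouping cross-terms in the determinant produces a perfect square of the form $|\gamma\,p + \beta_2(\cj{\omega_2}\alpha_1 + \cj{\alpha_2}\beta_1)|^2$, which, combined with the identity for $(1-|\omega_2|^2)\det M$ above and multiplied through by $1-|\omega_2|^2$, yields exactly \eqref{contrac_4_4_troisieme_ordre}.
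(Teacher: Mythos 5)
Your proposal is correct, but it takes a genuinely different route from the paper. The paper applies Parrott's theorem to the \emph{corner} decomposition $A=\begin{bmatrix}\omega_1&\alpha_1&\beta_1\end{bmatrix}$, $B=[\gamma]$, with $C$ the middle $3\times3$ block and $D$ the remainder of the last column; this forces it to compute $(\id-CC^*)^{-1/2}$ and $(\id-C^*C)^{-1/2}$ explicitly (via \Cref{diag_mat}), to pass through a Cholesky factorization $\id-C^*C=S_C^*S_C$ to evaluate $\widetilde Z=AS_C^{-1}$, and to split into subcases according to $\omega_2=0$ or not and $|\alpha_2|^2=1-|\omega_2|^2$ or not, tracking minimal solutions throughout. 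You instead peel off the last column: with $C=0$ Parrott degenerates (indeed one can bypass Parrott entirely with a single Schur complement on the $(4,4)$ entry of $\id-TT^*$) to the positivity test $vv^*\le(1-|\omega_4|^2)(\id-T_3T_3^*)$, and the hypothesis $\omega_3=0$ makes $\id-T_3T_3^*=\operatorname{diag}(M,1)$ completely explicit, so everything reduces to nested Schur complements on a concrete $3\times3$ Hermitian matrix. Your key identity for $(1-|\omega_2|^2)\det M$ is correct, and so is the determinant identification: writing $q=1-|\omega_4|^2-|\alpha_3|^2$, $p=M_{2,2}$, $m=\alpha_1\cj{\omega_2}+\beta_1\cj{\alpha_2}$ and $K=qM-v_av_a^*$, one gets $p\det K=q\bigl[\det M\,(qp-|\beta_2|^2)-|p\gamma+m\beta_2|^2\bigr]$, which multiplied by $1-|\omega_2|^2$ gives \eqref{contrac_4_4_troisieme_ordre}. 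What your route buys is the elimination of all square-root and minimal-solution computations; what it costs is a little extra care at the boundary. Two points to tighten in a full write-up: (a) the $(3,3)$ Schur complement when $q=0$ needs the separate remark that the off-diagonal entries of the third row must then vanish (this is consistent with your stated condition, using that $M\ge0$ is already guaranteed by the contractivity of $T_3$); (b) for a $2\times2$ Hermitian matrix, ``$K_{2,2}\ge0$ and $\det K\ge0$'' does \emph{not} imply $K\ge0$ when $K_{2,2}=0$, so in the case $p>0$, $qp=|\beta_2|^2$ you must verify $K_{1,1}\ge0$ separately --- it does follow (the determinantal condition then forces $\gamma p+m\beta_2=0$, whence $K_{1,1}=\tfrac{q}{p}\det M\ge0$ when $\beta_2\neq0$, and $\gamma=0$ when $\beta_2=0$), but this needs to be said.
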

	
	\begin{proof}
		Denote $\omega_3 = 0$. First of all, if $T$ is a contraction, then the compressions $\begin{bmatrix}
			\omega_1 & \alpha_1 & \beta_1 \\
			0 & \omega_2 & \alpha_2 \\
			0 & 0 & \omega_3
		\end{bmatrix}$ and $\begin{bmatrix}
			\omega_2 & \alpha_2 & \beta_2 \\
			0 & \omega_3 & \alpha_3 \\
			0 & 0 & \omega_4 
		\end{bmatrix}$ are contractions. As it is a necessary condition for $T$ to be a contraction, we will assume in the following that it is the case. Thus, from \Cref{gupta}, we have:
		\begin{equation}\label{contrac_4_4_premier_ordre_proof}
			|\alpha_i|^2 \leq (1-|\omega_i|^2)(1-|\omega_{i+1}|^2), \quad 1 \leq i \leq 3
		\end{equation}
		and 
		\begin{equation}
			\begin{multlined}\label{contrac_4_4_deuxieme_ordre_proof}
				|\beta_i (1-|\omega_{i+1}|^2)+ \alpha_i \alpha_{i+1} \cj{\omega_{i+1}} |^2  \leq \left[(1-|\omega_i|^2)(1-|\omega_{i+1}|^2) - |\alpha_i|^2\right] \times \\  \left[(1-|\omega_{i+1}|^2)(1-|\omega_{i+2}|^2) - |\alpha_{i+1}|^2 \right], \quad 1 \leq i \leq 2  
			\end{multlined}
		\end{equation}
		
		Let $A=\begin{bmatrix}
			\omega_1 & \alpha_1 & \beta_1
		\end{bmatrix}$, $B = \begin{bmatrix} \gamma \end{bmatrix}$, $C=\begin{bmatrix}
			0 & \omega_2 & \alpha_2 \\ 0 & 0 & \omega_3 \\ 0 & 0 & 0
		\end{bmatrix}$ and $D=\begin{bmatrix}
			\beta_2 \\ \alpha_3 \\ \omega_4
		\end{bmatrix}$.
		
		By assumption, $\begin{bmatrix}
			A \\ C
		\end{bmatrix}$ and $\begin{bmatrix}
			C & D
		\end{bmatrix}$ are contractions.

		By Parrott's theorem, $T$ is a contraction if and only if
		
		\begin{equation}\label{par-3_preuve_4_4}
			B = (\id-ZZ^*)^{1/2}V(\id-Y^*Y)^{1/2}-ZC^*Y, \text{ for some contraction } V,   \end{equation} 
		
		where $Y$ and $Z$ are contractions such that $D=(\id-CC^*)^{1/2}Y$ and $A=Z(\id-C^*C)^{1/2}$.
		
		Note that the existence of two contractions $Y$ and $Z$ such that $D=(\id-CC^*)^{1/2} Y$ and $A=Z(\id-C^*C)^{1/2}$ is ensured by Parrott's theorem for column (respectively row) matrix-operators, as we are assuming that $\begin{bmatrix}
			A \\ C
		\end{bmatrix}$ and $\begin{bmatrix}
			C & D
		\end{bmatrix}$ are contractions. 
		
		We have
		$\id - C C^* = \begin{bmatrix}
			1 - |\omega_2|^2 - |\alpha_2|^2 & 0 & 0 \\
			0 & 1 & 0 \\
			0 & 0 & 1
		\end{bmatrix}$ \quad \text{and} \quad $\id - C^* C = \begin{bmatrix}
			1 & 0 & 0 \\
			0 & 1- |\omega_2|^2 & -\alpha_2 \cj{\omega_2} \\
			0 & -\omega_2 \cj{\alpha_2} & 1-|\alpha_2|^2
		\end{bmatrix}$.
		
		\begin{itemize}[leftmargin=0pt]
			\item Assume first that $\omega_2 \neq 0$ and that $|\alpha_2|^2 < 1 - |\omega_2|^2$.
			
			Then, denoting $\Sigma = |\alpha_2|^2 + |\omega_2|^2$, we have \[\left(\id - C C^*\right)^{-1/2} = \begin{bmatrix}
				\frac{1}{\sqrt{1-\Sigma}} & 0 & 0 \\
				0 & 1 & 0 \\
				0 & 0 & 1
			\end{bmatrix},\] and, by \Cref{diag_mat}, \[\left(\id - C^* C\right)^{-1/2} = \frac{1}{\Sigma} \begin{bmatrix}
				\Sigma & 0 & 0 \\
				0 & \abs{\alpha_2}+\frac{\abs{\omega_2}}{\sqrt{1 - \Sigma}} & -\alpha_2 \cj{\omega_2}\left[1 - \frac{1}{\sqrt{1 - \Sigma}} \right] \\
				0 & -\cj{\alpha_2}\omega_2\left[1 - \frac{1}{\sqrt{1 - \Sigma}} \right] & |\omega_2|^2 +  \frac{|\alpha_2|^2}{\sqrt{1 - \Sigma}}  
			\end{bmatrix}. \]
			
			Thus, we get $Y = \left(\id - C C^*\right)^{-1/2} D = \begin{bmatrix}
				\frac{\beta_2}{\sqrt{1-\Sigma}} \\ \alpha_3 \\ \omega_4
			\end{bmatrix}$.
				
				Moreover, we get also \begin{align*}
					Z  & = A \left(\id - C^* C\right)^{-1/2} \\
					& = \begin{bmatrix} \omega_1 & \frac{\alpha_1\left(|\alpha_2|^2  \sqrt{1- \Sigma} + |\omega_2|^2\right)-\beta_1 \cj{\alpha_2} \omega_2 \left(\sqrt{1-\Sigma} -1 \right)}{\Sigma \sqrt{1-\Sigma}} & \frac{-\alpha_2\cj{\omega_2}\alpha_1 \left( \sqrt{1- \Sigma} - 1 \right)+\beta_1\left(|\omega_2|^2 \sqrt{1- \Sigma}+|\alpha_2|^2 \right)}{\Sigma \sqrt{1-\Sigma}}\end{bmatrix} \\
					& : = \frac{1}{\Sigma \sqrt{1-\Sigma}} \begin{bmatrix}
						z_1' & z_2' & z_3'
					\end{bmatrix},
				\end{align*} 
				where \begin{align*} 
					& z_1'= \Sigma \sqrt{1-\Sigma} \omega_1; \\
					& z_2'=\alpha_1\left(|\alpha_2|^2  \sqrt{1- \Sigma} + |\omega_2|^2\right)-\beta_1 \cj{\alpha_2} \omega_2 \left(\sqrt{1-\Sigma} -1 \right); \\
					& z_3'=-\alpha_2\cj{\omega_2}\alpha_1 \left( \sqrt{1- \Sigma} - 1 \right)+\beta_1\left(|\omega_2|^2 \sqrt{1- \Sigma}+|\alpha_2|^2 \right).
				\end{align*}
				Thus, we have:
				\[
				ZC^*Y = \frac{\beta_2}{\Sigma \left(1- \Sigma\right)} \left(\cj{\omega_2}z_2' + \cj{\alpha_2}z_3' \right) = \frac{\beta_2}{1-\Sigma} \left(\alpha_1 \cj{\omega_2} + \beta_1 \cj{\alpha_2}\right).
				\]
				
				Now, let us consider the Cholesky factorization (see \textit{e.g.} \autocite[chapter 4]{paulsenIntroductionTheoryReproducing2016b}) of $\id-C^*C$: let $S_C$ be the (unique) upper-triangular matrix with positive diagonal such that \[
				\id-C^*C = S_C^*S_C. \] It is easy to see that: 
				\[S_C = \begin{bmatrix}
					1 & 0 & 0 \\
					0 & \sqrt{1-|\omega_2|^2} & \frac{-\alpha_2 \cj{\omega_2}}{\sqrt{1 - |\omega_2|^2}} \\
					0 & 0 & \sqrt{\frac{1 - |\omega|^2 - |\alpha_2|^2}{1-|\omega_2|^2}}
				\end{bmatrix}.\]
				
				Moreover, for every $x \in \mathbb{C}^3$, we have $\left|\left|S_C \, x\right|\right|^2 = \ps{(\id-C^*C) \, x,  x} = \left|\left|D_C \, x\right|\right|^2$. Therefore, there exists an isometry $U_C \in \mathcal{B}\left(\mathbb{C}^3\right)$ such that $D_C=U_CS_C$. Since $U_C$ acts on the finite dimensional space $\mathbb{C}^3$, the operator $U_C$ is even unitary.
				
				Now, let $\widetilde{Z}=ZU_C$. We have
				\begin{equation}\label{eq-z-tilde-s-c}
					A=\widetilde{Z}S_C 
				\end{equation} 
				and 
				\begin{equation}\label{id-z-tilde-z^*}
					\id-ZZ^* = \id - \widetilde{Z}\widetilde{Z}^*.
				\end{equation}
				
				From \eqref{eq-z-tilde-s-c} we get:
				\[
				\widetilde{Z} = AS_C^{-1}= \begin{bmatrix}
					\omega_1 & \frac{\alpha_1}{\sqrt{1-|\omega_2|^2}} & \frac{\alpha_1\alpha2\cj{\omega_2} + \beta_1 (1-|\omega_2|^2)}{\sqrt{1-|\omega_2|^2}\sqrt{1-|\omega_2|-|\alpha_2|^2}}
				\end{bmatrix}.
				\]
				
				Finally, by \eqref{par-3_preuve_4_4} and \eqref{id-z-tilde-z^*}, $T$ is a contraction if and only if:
				
				\[
				\begin{multlined}\left| \gamma \left(1- |\omega_2|^2 - |\alpha_2|^2 \right) + \beta_2 \left(\cj{\omega_2} \alpha_1 + \cj{\alpha_2} \beta_1 \right) \right|^2 (1-|\omega_2|^2)  \\ \leq \left[ \left( (1-|\omega_1|^2)(1-|\omega_2|^2)-|\alpha_1|^2\right) \left(1 - |\omega_2|^2 - |\alpha_2|^2\right) - \left| \beta_1 (1-|\omega_2|^2) + \alpha_1 \alpha_2 \cj{\omega_2} \right|^2 \right]  \times \\ \left[ \left(1- |\omega_2|^2 - |\alpha_2|^2\right) \left(1 - |\omega_4|^2 - |\alpha_3|^2  \right) - |\beta_2|^2  \right]
				\end{multlined}.
				\]
				
					
					\item Assume now that $\omega_2 = 0$ and $|\alpha_2| <1$.
					
					We have then $\id-CC^* = \begin{bmatrix}
						1- |\alpha_2|^2 & 0 & 0 \\
						0 & 1 & 0 \\
						0 & 0 & 1
					\end{bmatrix}$ and $\id - C^*C = \begin{bmatrix}
						1 & 0 & 0 \\
						0 & 1 & 0 \\
						0 & 0 & 1-|\alpha_2|^2
					\end{bmatrix}$.
					
					Thus, we obtain \[
					Y= \left(\id-CC^*\right)^{-1/2} D = \begin{bmatrix}
						\frac{\beta_2}{\sqrt{1-|\alpha_2|^2}} \\
						\alpha_3 \\
						\omega_4
					\end{bmatrix} 
					\] and \[
					Z=A\left(\id-C^* C\right)^{-1/2} = \begin{bmatrix}
						\omega_1 & \alpha_1 & \frac{\beta_1}{\sqrt{1-|\alpha_2|^2}}\end{bmatrix}
					\]
					and we are brought back to the previous case (taking $\omega_2=0$ in the expressions of $Y$ and $Z$). Note that in this case, as $\id-C^*C$ is diagonal, $S_C=D_C$ and, thus, $\widetilde{Z}=Z$.

					\item Assume now that $\omega_2 \neq 0$ and $\alpha_2 = e^{i \theta_2} \sqrt{1-|\omega_2|^2}$, for $\theta_2 \in ]-\pi, \pi]$.
					
					Note that in that case, \eqref{contrac_4_4_deuxieme_ordre_proof} implies that $\beta_1 = \frac{- \alpha_1 \alpha_2  \cj{\omega_2}}{(1-|\omega_2|^2)}$ and $\beta_2 = 0$.
					
					Moreover, we have also:
					\[ \left(\id-CC^*\right)^{1/2}=\begin{bmatrix}
						0 & 0 & 0 \\
						0 & 1 & 0 \\
						0 & 0 & 1
					\end{bmatrix} \] and 
					\[
					\left(\id-C^*C\right)^{1/2} = \begin{bmatrix}
						1 & 0 & 0 \\
						0 & 1-|\omega_2|^2 & -e^{i \theta_2} \cj{\omega_2} \sqrt{1 - |\omega_2|^2} \\
						0 & - \omega_2 e^{-i \theta_2} \sqrt{1 - |\omega_2|} & |\omega_2|^2
					\end{bmatrix}.
					\]
					
					Let $Y= \begin{bmatrix}
						y_1 \\ y_2 \\ y_3 \end{bmatrix}$ and $Z= \begin{bmatrix}
						z_1 & z_2 & z_3
					\end{bmatrix}$.
					Then, $D = \left(\id - CC^*\right)^{1/2}Y$ if and only if $y_2 = \alpha_3$ and $y_3 = \omega_4$.
					Taking for $Y$ the minimal solution of the equation $D=D_{C^*}Y$, we can set $y_1=0$.
					
					Moreover, $A = Z \left(\id-C^* C\right)^{1/2}$ if and only if:
					\begin{align*}\left\lbrace \begin{matrix} z_1 & & & = \omega_1 \\
							& (1-|\omega_2|^2) z_2 & - e^{- i \theta_2} \omega_2 \sqrt{1-|\omega_2|^2} z_3 & = \alpha_1 \\
							&  - e^{i \theta_2} \cj{\omega_2} \sqrt{1- |\omega_2|^2} z_2 & + |\omega_2|^2 z_3 & = \beta_1
						\end{matrix}\right.
					\end{align*}
					which is equivalent to:
					\begin{align*}
						\left\lbrace \begin{matrix} z_1 & & & = \omega_1 \\
							& (1-|\omega_2|^2) z_2 & - e^{- i \theta_2} \omega_2 \sqrt{1-|\omega_2|^2} z_3 & = \alpha_1
						\end{matrix}\right.
					\end{align*}
					as we have, by assumption, $\beta_1 = \frac{- \alpha_1 e^{i \theta_2}  \cj{\omega_2}}{\sqrt{1-|\omega_2|^2}}$.
					Again, we take for $Z$ the minimal solution of the equation $A=ZD_C$, which means that $Z=0$ on $\text{Im}(D_C)^{\perp}$.
					
					It is easy to see that the rank of $D_C$ is equal to $2$, and that  $\text{Im}(D_C)^{\perp}$ is generated by $\begin{pmatrix} 0 \\ e^{i \theta_2} \cj{\omega_2} \\ \sqrt{1- |\omega_2|^2} \end{pmatrix}$. If we require that $D_C=0$ on $\text{Im}(D_C)^{\perp}$, then, we obtain:
					
					\begin{align*}
						\left\lbrace \begin{matrix} z_1 & = \omega_1 \\
							z_2 & = \alpha_1 \\
							z_3 & = \frac{- e^{i \theta_2} \cj{\omega_2} \alpha_1}{\sqrt{1-|\omega_2|^2}} \end{matrix}\right..
					\end{align*}
					Then we have $ZC^*Y =0$.
					
					Finally, $T$ is a contraction if and only if
					
					\[
					|\gamma |^2 \leq \left[1- |\omega_1|^2- \frac{|\alpha_1|^2}{1-|\omega_2|^2}\right] \left[1 -|\omega_4|^2 -|\alpha_3|^2 \right].
					\]

					\item Assume now that $\omega_2 =0$ and that $\alpha_2 = e^{i \theta_2}$, for some $\theta_2 \in ]- \pi, \pi]$.
				\end{itemize}
				
				We have then $\id-CC^* = \begin{bmatrix}
					0 & 0 & 0 \\
					0 & 1 & 0 \\
					0 & 0 & 1
				\end{bmatrix}$ and $\id - C^*C\begin{bmatrix}
					1 & 0 & 0 \\
					0 & 1 & 0 \\
					0 & 0 & 0
				\end{bmatrix}$.
				
				Again, let $Y= \begin{bmatrix}
					y_1 \\ y_2 \\ y_3 \end{bmatrix}$ and $Z= \begin{bmatrix}
					z_1 & z_2 & z_3
				\end{bmatrix}$.
				
				We have
				\[
				D = \left(\id - CC^*\right)^{1/2}Y \iff \systeme*{y_2 = \alpha_3,
					y_3 = \omega_4}.
				\]
				Taking for $Y$ the minimal solution of the equation $D=D_{C^*}Y$ gives us $y_1=0$.
				
				Moreover, we have $A = Z \left(\id-C^* C\right)^{1/2} \iff \systeme*{z_1 = \omega_1, z_2 = \alpha_1}$.
				
				Taking for $Z$ the minimal solution of the equation $A=ZD_C$ gives us $z_3=0$. We are therefore brought back to the previous case (taking $\omega_2=0$ in the expressions of $Y$ and $Z$).	
			\end{proof}
			
			\begin{defi}
				For $\omega \in \oud$, we define the Möbius transformation 
				$$M_\omega : \mathbb{C}\backslash\left\{\frac{1}{\cj{\omega}}\right\} \ni z \mapsto \frac{\omega-z}{1-\cj{\omega}z}$$
			\end{defi}
			
			We recall that $M_{\omega}$ is involutive, holomorphic on $\oud$, and that for all $z \in \cud$, $|M_{\omega}(z)| \leq 1$, with equality if and only if $|z|=1$. For more details, see for instance \autocite[Chapter IX, Section 2]{palkaIntroductionComplexFunction1991b}.
			
			\begin{defi}
				Let $\omega \in \oud$. For a Hilbert space $H$, we define 
				\[
				\mathcal{B}_ {\omega}(H) := \left\{ T \in \mathcal{B}(H) \: : \: \frac{1}{\cj{\omega}} \not \in \sigma(T) \right\}
				\]
				and \[
				M_{\omega}:= \mathcal{B}_ {\omega}(H) \ni T \mapsto (\omega \id - T)(\id - \cj{\omega}T)^{-1}.
				\]
			\end{defi}
			
			\begin{rema}
				All contractions belong to $\mathcal{B}_ {\omega}(H)$.
			\end{rema}
			
			\begin{lem}\label{lem-mobius-op} With the above notation:
				\begin{enumerate}
					\item $M_{\omega}$ is involutive ;
					\item $T$ is a contraction if and only if $M_{\omega}(T)$ is a contraction.
				\end{enumerate}
			\end{lem}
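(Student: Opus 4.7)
My plan is to handle both claims by direct algebraic manipulation, exploiting the fact that although $T$ and operator expressions in $T$ do not commute with arbitrary operators, any two polynomials in $T$ do commute, and in particular $\omega\id - T$ commutes with $(\id-\cj{\omega}T)^{-1}$. The key observation is that $M_{\omega}$ is modelled on the scalar Möbius involution, so the same identities used in the scalar case carry through verbatim provided we track the order of factors.

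For (i), set $S := M_{\omega}(T) = (\omega\id-T)(\id-\cj{\omega}T)^{-1}$. I first compute
\[
\omega\id - S = \bigl[\omega(\id-\cj{\omega}T)-(\omega\id-T)\bigr](\id-\cj{\omega}T)^{-1} = (1-|\omega|^2)\, T\,(\id-\cj{\omega}T)^{-1},
\]
and symmetrically
\[
\id-\cj{\omega}S = \bigl[(\id-\cj{\omega}T)-\cj{\omega}(\omega\id-T)\bigr](\id-\cj{\omega}T)^{-1} = (1-|\omega|^2)\,(\id-\cj{\omega}T)^{-1}.
\]
In particular $\id-\cj{\omega}S$ is invertible, so $S\in\mathcal{B}_{\omega}(H)$ and $M_{\omega}(M_{\omega}(T))$ is well-defined. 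Inverting the second identity and multiplying by the first gives $M_{\omega}(S)=T$, which proves involutivity.

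For (ii), the plan is to show
\[
\id - S^*S = (1-|\omega|^2)\,(\id-\omega T^*)^{-1}\,(\id-T^*T)\,(\id-\cj{\omega}T)^{-1}.
\]
To establish this I expand
\[
S^*S = (\id-\omega T^*)^{-1}(\cj{\omega}\id-T^*)(\omega\id-T)(\id-\cj{\omega}T)^{-1},
\]
pull out the left and right factors, and simplify the inner bracket
\[
(\id-\omega T^*)(\id-\cj{\omega}T)-(\cj{\omega}\id-T^*)(\omega\id-T) = (1-|\omega|^2)(\id-T^*T),
\]
which is an elementary expansion. Because $(\id-\cj{\omega}T)^{-1}$ is invertible, the resulting expression for $\id-S^*S$ is congruent to $(1-|\omega|^2)(\id-T^*T)$; since $|\omega|<1$, this is semi-positive definite if and only if $\id-T^*T$ is. Thus $T$ being a contraction implies $S$ is a contraction, and by (i) applied to $S$ the converse follows at once.

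I do not expect any genuine obstacle: both parts reduce to bookkeeping with commuting polynomials in $T$ and $T^*$. The only point requiring a touch of care is ensuring that the relevant inverses exist at each stage, which is handled by the identities $\id-\cj{\omega}S = (1-|\omega|^2)(\id-\cj{\omega}T)^{-1}$ and the standing hypothesis $1/\cj{\omega}\notin\sigma(T)$.
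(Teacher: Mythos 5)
Your proof is correct, but it takes a more hands-on route than the paper at both steps. For involutivity, the paper argues abstractly: it checks via the spectral mapping theorem that $\frac{1}{\cj{\omega}}\notin\sigma(M_{\omega}(T))$ (so the composition is defined) and then invokes the composition property of the rational functional calculus together with the scalar identity $M_{\omega}\circ M_{\omega}=\mathrm{id}$; you instead verify $M_{\omega}(M_{\omega}(T))=T$ by explicit algebra, using the identities $\omega\id-S=(1-|\omega|^2)T(\id-\cj{\omega}T)^{-1}$ and $\id-\cj{\omega}S=(1-|\omega|^2)(\id-\cj{\omega}T)^{-1}$, which also settle the well-definedness issue without appealing to spectral theory. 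For the contraction statement, the paper simply cites Sz.-Nagy--Foia\c{s}, whereas you derive the defect identity $\id-S^*S=(1-|\omega|^2)(\id-\omega T^*)^{-1}(\id-T^*T)(\id-\cj{\omega}T)^{-1}$; since $(\id-\cj{\omega}T)^{-1}$ is invertible and $1-|\omega|^2>0$, this congruence gives both implications at once (you do not even need to invoke part (i) for the converse, though doing so is also fine). Both arguments are sound; yours is self-contained and produces an explicit formula for $\id-S^*S$ as a by-product, while the paper's is shorter by delegating to standard machinery and the literature. One minor point of hygiene: in part (ii) the equivalence as stated presupposes $T\in\mathcal{B}_{\omega}(H)$ so that $M_{\omega}(T)$ makes sense; your argument, like the paper's, operates under that standing hypothesis, and it is worth saying so explicitly.
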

			
			\begin{proof}
				\leavevmode 
				\begin{enumerate}
					\item Firstly, it is worth noting that if $T \in \mathcal{B}_ {\omega}(H)$, then the spectrum $\sigma(M_{\omega}(T)) = M_{\omega}(\sigma(T))$ does not contain $\frac{1}{\cj{\omega}}$. This is because the equation $\frac{\omega-z}{1- \cj{\omega}z} = \frac{1}{\cj{\omega}}$ has no solution. Therefore, $M_{\omega}(M_{\omega}(T))$ is well-defined. Subsequently, using properties of the rational functional calculus, we can write $M_{\omega}(M_{\omega}(T)) = \left(M_{\omega} \circ M_{\omega} \right)(T) = T$.
					\item From the previous item, it is enough to prove that if $T$ is a contraction, then so is $M_{\omega}(T)$. This is proved in \autocite[page 14]{sz.-nagyHarmonicAnalysisOperators2010a}.
				\end{enumerate}
			\end{proof}
			
			\begin{proof}[Proof of \Cref{critere_4_4_gen}] \leavevmode
			First of all, we recall the reader the definition of \emph{divided differences} for pairwise distinct points $z_0, z_1, \cdots , z_n \in \mathbb{C}$ and $f : \mathbb{C} \to \mathbb{C}$ : the divided differences of $f$ at points $z_0, z_1, \cdots , z_n$ satisfy $[f(z_k)] = f(z_k)$ and the recurrence relation
						$$\left[f(z_k), \cdots , f(z_{k+j})\right] = \frac{\left[f(z_{k+1}), \cdots , f(z_{k+j})\right] - \left[f(z_{k}), \cdots , f(z_{k+j-1})\right] }{z_{k+j} - z_k},$$ for $ 0 \leq k \leq j \leq n$.
						We refer to \cite[Chapter 22]{simonLoewnersTheoremMonotone2019} or \autocite{deboorDividedDifferences2005b} for the definition and basic properties of \emph{divided differences} of $n+1$ (not necessarily distinct) points.
				\begin{itemize}[leftmargin=*]
					\item We first consider the case where $|\omega_2| < 1$ and $|\omega_3|<1$. \\
					In this case, we use \Cref{lem-mobius-op}: $T$ is a contraction if and only if $M_{\omega_3}(T)$ is a contraction. In the following, as long as there is no ambiguity, we will just write $M$ instead of $M_{\omega_3}$. Using the explicit rational functional calculus (see \eg \autocite{davisExplicitFunctionalCalculus1973b, simonLoewnersTheoremMonotone2019, highamFunctionsMatrices2008a}), we get the following matrix representation for $M(T)$:
					\begin{flalign}\label{matrice-M-t}
						&M(T)=\begin{pmatrix}
							M(\omega_1) & \alpha_1 [M(\omega_1), M(\omega_2)] & \lambda_1 & \mu \\
							0 & M(\omega_2) & \alpha_2 [M(\omega_2), M(\omega_3)] & \lambda_2 \\
							0 & 0 & M(\omega_3) & \alpha_3 [M(\omega_3), M(\omega_4)] \\
							0 & 0 & 0 & M(\omega_4)
						\end{pmatrix} &
					\end{flalign}
					\begin{flalign*}
						\text{where} \qquad &  \lambda_1 =  \alpha_1 \alpha_2 [M(\omega_1), M(\omega_2), M(\omega_3)]  + \beta_1 [M(\omega_1), M(\omega_3)] &  \\
						& \lambda_2  =  \alpha_2 \alpha_3 [M(\omega_2), M(\omega_3), M(\omega_4)]  + \beta_2 [M(\omega_2), M(\omega_4))] &   \\
						& \mu  =  \begin{multlined}[t] \alpha_1 \alpha_2 \alpha_3 [M(\omega_1), M(\omega_2), M(\omega_3), M(\omega_4)]  + \alpha_1 \beta_2 [M(\omega_1), M(\omega_2), M(\omega_4)] \\  + \alpha_3 \beta_1 [M(\omega_1), M(\omega_3), M(\omega_4)] + \gamma [M(\omega_1), M(\omega_4)]. \end{multlined} &
					\end{flalign*}
					
					For $i, j, k, l \in \llbracket 1, 4 \rrbracket$, we have:
					\begin{align*}
						[ M(\omega_i), M(\omega_j)] & = \frac{|\omega_3|^2-1}{(1-\cj{\omega_3}\omega_i)(1-\cj{\omega_3}\omega_j)} \\
						[M(\omega_i), M(\omega_j), M(\omega_k)] & = \frac{\cj{\omega_3}(|\omega_3|^2 -1)}{(1-\cj{\omega_3}\omega_i)(1-\cj{\omega_3}\omega_j)(1-\cj{\omega_3}\omega_k)} \\
						[M(\omega_i), M(\omega_j), M(\omega_k), M(\omega_l)] & = \frac{\cj{\omega_3}^2(|\omega_3|^2 -1)}{(1-\cj{\omega_3}\omega_i)(1-\cj{\omega_3}\omega_j)(1-\cj{\omega_3}\omega_k)(1-\cj{\omega_3}\omega_l)}
					\end{align*}
					Hence, \eqref{matrice-M-t} can be rewritten:
					\begin{align*}
						M(T)= \begin{pmatrix}
							\frac{\omega_3-\omega_1}{1-\cj{\omega_3}\omega_1} & \frac{\alpha_1 (|\omega_3|^2 -1 )}{(1-\cj{\omega_3}\omega_1)(1-\cj{\omega_3}\omega_2)} & \frac{-\alpha_1 \alpha_2 \cj{\omega_3} - \beta_1 (1- \cj{\omega_3}\omega_2)}{(1-\cj{\omega_3}\omega_1)(1-\cj{\omega_3}\omega_2)} & \mu \\
							0 & \frac{\omega_3-\omega_2}{1-\cj{\omega_3}\omega_2} & \frac{-\alpha_2}{1-\cj{\omega_3}\omega_2}  & \frac{-\alpha_2 \alpha_3 \cj{\omega_3} + \beta_2 (|\omega_3|^2 - 1)}{(1-\cj{\omega_3}\omega_2)(1- \cj{\omega_3}\omega_4)} \\
							0 & 0 & 0 & \frac{-\alpha_3}{1-\cj{\omega_3}\omega_4} \\
							0 & 0 & 0 & \frac{\omega_3-\omega_4}{1-\cj{\omega_3}\omega_4}
						\end{pmatrix}
					\end{align*}
					\begin{align*} \text{where} \quad  \mu & = \begin{multlined}[t] \frac{\gamma (|\omega_3|^2 -1)}{(1-\cj{\omega_3}\omega_1)(1-\cj{\omega_3}\omega_4)} + \frac{\alpha_1 \beta_2 \cj{\omega_3}(|\omega_3|^2 -1)}{(1-\cj{\omega_3}\omega_1)(1-\cj{\omega_3}\omega_2)(1-\cj{\omega_3}\omega_4)} \\ - \frac{\alpha_3 \beta_1 \cj{\omega_3}}{(1-\cj{\omega_3}\omega_1)(1-\cj{\omega_3}\omega_4)} - \frac{\alpha_1 \alpha_2 \alpha_3 \cj{\omega_3}^2}{(1-\cj{\omega_3}\omega_1)(1-\cj{\omega_3}\omega_2)(1-\cj{\omega_3}\omega_4)}
						\end{multlined} \\
						&= \frac{(|\omega_3|^2 -1)[\gamma (1-\cj{\omega_3}\omega_2)+ \alpha_1\beta_2\cj{\omega_3}]-\cj{\omega_3} [\alpha_3 \beta_1(1-\cj{\omega_3}\omega_2)+\alpha_1\alpha_2\alpha_3 \cj{\omega_3}]}{(1-\cj{\omega_3}\omega_1)(1-\cj{\omega_3}\omega_2)(1-\cj{\omega_3}\omega_4)}
					\end{align*}
					
					Now, we apply \Cref{crit-4-4-0} to $M(T)$, recalling that $|M(w_2)|=1$ if and only if $|\omega_2|=1$ and using the key identity:
					
					\begin{equation}\label{id-clef}
						\forall \: u , v \in \mathbb{C}, \: (1-|u|^2)(1-|v|^2) = |1 - \cj{u}v|^2 - |u - v |^2
					\end{equation}
					
					Using this identity, it is easy to see that 
					\[
					\left|\frac{-\alpha_2}{1-\cj{\omega_3}\omega_2}\right|^2 \leq 1 - \left| \frac{\omega_3 - \omega_2}{1 - \cj{\omega_3}\omega_2} \right|^2
					\]
					if and only if
					\[
					|\alpha_2|^2 \leq 1 - |\omega_2|^2,
					\]
					and that equality holds in one the two inequalities if and only if equality holds in the other one. 
					
					Thus, we can distinguish two subcases:
					
					\begin{itemize}[leftmargin=*]
						\item If $|\alpha_2|^2 < 1 - |\omega_2|^2$, it is easy to see (using \Cref{id-clef}) that \eqref{contrac_4_4_premier_ordre} and \eqref{contrac_4_4_deuxieme_ordre} applied to $M(T)$ are (respectively) equivalent to \eqref{contrac_4_4_premier_ordre_gen} and \eqref{contrac_4_4_deuxieme_ordre_gen}. Let us give some details to show that  \eqref{contrac_4_4_troisieme_ordre} applied to $M(T)$ is equivalent to \eqref{contrac_4_4_troisieme_ordre_gen}:
						
						\eqref{contrac_4_4_troisieme_ordre} applied to $M(T)$ is equivalent to 
						\begin{equation}\label{contrac_4_4_troisieme_ordre_intermediaire_lettres}
							\left|E\right|^2 \left(1-\left|\frac{\omega_3-\omega_2}{1-\cj{\omega_3}\omega_2} \right|^2 \right) \leq F \cdot G
						\end{equation}
						where
						\[
						E = \begin{multlined}[t]
							\frac{(|\omega_3|^2 -1)[\gamma (1- \cj{\omega_3}\omega_2)+\alpha_1\beta_2\cj{\omega_3}]-\cj{\omega_{3}}[\alpha_3\beta_1(1-\cj{\omega_3}\omega_2)+\alpha_1\alpha_2\alpha_3\cj{\omega_3}]}{(1-\cj{\omega_3}\omega_1)(1-\cj{\omega_3}\omega_2)(1-\cj{\omega_3}\omega_4)} \\ \times \left(1 - \left| \frac{\omega_3-\omega_2}{1-\cj{\omega_3}\omega_2} \right|^2 - \left| \frac{\alpha_2}{1-\cj{\omega_3}\omega_2}\right|^2\right) - \frac{\alpha_2 \alpha_3 \cj{\omega_3}+\beta_2(1-|\omega_3|^2)}{(1-\cj{\omega_3}\omega_2)(1-\cj{ \omega_3}\omega_4)} \\ \times \left( \frac{\alpha_1(\cj{ \omega_3} - \cj{ \omega_2})(|\omega_3|^2 -1)}{|1-\cj{ \omega_3} \omega_2|^2(1-\cj{ \omega_3}\omega_1)} + \frac{|\alpha_2|^2 \alpha_1 \cj{ \omega_3}+ \beta_1\cj{ \alpha_2} (1- \cj{ \omega_3} \omega_2)}{|1-\cj{ \omega_3} \omega_2|^2 (1-\cj{ \omega_3} \omega_1)}\right)
						\end{multlined}
						\]
						\[
						F = \begin{multlined}[t] \left[1 - \left| \frac{\omega_3 - \omega_2}{1- \cj{ \omega_3} \omega_2} \right|^2 -\left|\frac{\alpha_2}{1-\cj{ \omega_3}\omega_2} \right|^2 \right]\left[\left( 1 - \left| \frac{\omega_3 - \omega_1}{1- \cj{ \omega_3}\omega_1} \right|^2\right)\left( 1 - \left| \frac{\omega_3 - \omega_2}{1- \cj{ \omega_3}\omega_2} \right|^2\right) \right. \\ \left. - \left| \frac{\alpha_1 (|\omega_3|^2-1)}{(1-\cj{ \omega_3}\omega_1)(1-\cj{ \omega_3}\omega_2)} \right|^2  \right] - \left| \frac{-\alpha_1 \alpha_2 \cj{ \omega_3} - \beta_1 (1 - \cj{ \omega_3}\omega_2)}{(1-\cj{ \omega_3}\omega_1)(1- \cj{\omega_3}\omega_2)} \left( 1 - \left| \frac{\omega_3 - \omega_2}{1 - \cj{ \omega_3}\omega_2} \right|^2 \right) \right. \\ \left. + \frac{\alpha_1 \alpha_2(1-|\omega_3|^2)(\cj{ \omega_3} - \cj{ \omega_2})}{(1-\cj{ \omega_3}\omega_1)(1-\cj{ \omega_3} \omega_2)|1-\cj{ \omega_3}\omega_2|^2}\right|^2
						\end{multlined}
						\]
						and
						\begin{align*}
							G =  \begin{multlined}[t] \left(1 - \left| \frac{\omega_3 - \omega_2}{1 - \cj{ \omega_3}\omega_2} \right|^2 - \left| \frac{\alpha_2}{1- \cj{ \omega_3}\omega_2} \right|^2  \right) \left(1 - \left| \frac{\omega_3 - w_4}{1 - \cj{ \omega_3} \omega_4} \right|^2  - \left| \frac{\alpha_3}{1 - \cj{ \omega_3}\omega_4}\right|^2  \right)  \\ - \left| \frac{\alpha_2 \alpha_3 \cj{ \omega_3} + \beta_2 (1- |\omega_3|^2)}{(1-\cj{ \omega_3} \omega_2)(1- \cj{\omega_3}\omega_4)} \right|^2
							\end{multlined}
						\end{align*}
						
						On the one hand, we have:
						\begin{align*}
							& (1-\cj{ \omega_3}\omega_1)(1- \cj{ \omega_3}\omega_2)(1-\cj{ \omega_3}\omega_4) \left|1- \cj{ \omega_3}\omega_2\right|^2 \cdot E \\
							=&  \begin{multlined}[t]
								\left(|\omega_3|^2-1\right) (1-\cj{ \omega_3}\omega_2) \left[ \gamma \left( (1-|\omega_2|^2)(1-|\omega_3|^2) - |\alpha_2|^2 \right) + \alpha_1 \beta_2 \cj{ \omega_2} (1-|\omega_3|^2) \right. \\ \left. + \alpha_3 \beta_1 \cj{ \omega_3} (1-|\omega_2|^2) + \beta_1 \beta_2 \cj{ \alpha_2} + \alpha_1 \alpha_2 \alpha_3 \cj{\omega_2} \cj{ \omega_3} \right]
							\end{multlined}
						\end{align*}
						
						On the other hand, we have:
						\begin{align*}
							& \left|(1-\cj{ \omega_3} \omega_2)^2(1-\cj{ \omega_3}\omega_1)\right|^2 \cdot F \\
							= & \begin{multlined}[t] \left(1-|\omega_3|^2\right)^2 \left[\left( (1-|\omega_1|^2) (1-|\omega_2|^2) - |\alpha_1|^2 \right)\left( (1-|\omega_2|^2) (1-|\omega_3|^2) - |\alpha_2|^2 \right) \right. \\ \left. - \left| \alpha_1 \alpha_2 \cj{ \omega_2} + \beta_1 \left(1-|\omega_2|^2\right)\right|^2 \right]
							\end{multlined}
						\end{align*}
						and
						\begin{align*}
							& \left|(1-\cj {\omega_3}w_2)(1-\cj{ \omega_3}\omega_4)\right|^2 \cdot G \\
							= & \begin{multlined}[t]\left( (1-|\omega_2|^2) (1-|\omega_3|^2) - |\alpha_2|^2 \right)\left( (1-|\omega_3|^2) (1-|\omega_4|^2) - |\alpha_3|^2 \right) \\ - \left| \alpha_2 \alpha_3 \cj{ \omega_3} + \beta_2 \left(1-|\omega_3|^2\right)\right|^2
							\end{multlined}
						\end{align*}
						
						Therefore, \eqref{contrac_4_4_troisieme_ordre_intermediaire_lettres} is equivalent to \eqref{contrac_4_4_troisieme_ordre_gen}.
						
						\item If $|\alpha_2|^2 = 1 - |\omega_2|^2$, we check that \eqref{contrac_4_4_lim_cond_prem_ordre}, \eqref{contrac_4_4_lim_cond_deuxieme_ordre_b_1}, \eqref{contrac_4_4_lim_cond_deuxieme_ordre_b_2} and \eqref{contrac_4_4_lim_cond_trois_ordre} applied to $M(T)$ are equivalent to \eqref{contrac_4_4_lim_premier_ordre_gen}, \eqref{contrac_4_4_lim_deuxieme_ordre_gen} and \eqref{contrac_4_4_lim_troisieme_ordre_gen}.
					\end{itemize}
					
					%
					
					\item We are now considering the case where $|\omega_3|=1$.
					
					First of all, if $T$ is a contraction, then the compressions $\begin{bmatrix}
						\omega_1 & \alpha_1 & \beta_1 \\
						0 & \omega_2 & \alpha_2 \\
						0 & 0 & \omega_3
					\end{bmatrix}$ and $\begin{bmatrix}
						\omega_2 & \alpha_2 & \beta_2 \\
						0 & \omega_3 & \alpha_3 \\
						0 & 0 & \omega_4 
					\end{bmatrix}$ are contractions. As it is a necessary condition for $T$ to be a contraction, we will assume in the following that it is the case. 
					
					let $A=\begin{bmatrix}
						\omega_1 & \alpha_1 & \beta_1
					\end{bmatrix}$, $B = \begin{bmatrix} \gamma \end{bmatrix}$, $C=\begin{bmatrix}
						0 & \omega_2 & \alpha_2 \\ 0 & 0 & \omega_3 \\ 0 & 0 & 0
					\end{bmatrix}$ and $D=\begin{bmatrix}
						\beta_2 \\ \alpha_3 \\ \omega_4
					\end{bmatrix}$.
					
					By assumption, $\begin{bmatrix}
						A \\ C
					\end{bmatrix}$ and $\begin{bmatrix}
						C & D
					\end{bmatrix}$ are contractions.

					By Parrott's theorem, $T$ is a contraction if and only if:
					\begin{equation}{} 
						B = (\id-ZZ^*)^{1/2}V(\id-Y^*Y)^{1/2}-ZC^*Y, \text{ for some contraction } V,  \label{parrott-eq} \end{equation}
					where $Y$ and $Z$ are contractions such that $D=(\id-CC^*)^{1/2}Y$ and $A=Z(\id-C^*C)^{1/2}$.
					
						
						We have:
						$(\id - CC^*)^{1/2} = \begin{bmatrix}
							\sqrt{1-|\omega_2|^2} & 0 & 0 \\
							0 & 0 & 0 \\
							0 & 0 & 1
						\end{bmatrix}$ \quad ; \quad $(\id - C^* C)^{1/2} = \begin{bmatrix} 1 & 0 & 0 \\
							0 & \sqrt{1-|\omega_2|^2} & 0 \\
							0 & 0 & 0 \end{bmatrix}$
						
						\begin{itemize}[leftmargin=*]
							\item If $|\omega_2| < 1$:
							
							Applying the criterion for $3 \times 3$ matrices, the assumption that the compressions $\begin{bmatrix}
								\omega_1 & \alpha_1 & \beta_1 \\
								0 & \omega_2 & \alpha_2 \\
								0 & 0 & \omega_3
							\end{bmatrix}$ and $\begin{bmatrix}
								\omega_2 & \alpha_2 & \beta_2 \\
								0 & \omega_3 & \alpha_3 \\
								0 & 0 & \omega_4 
							\end{bmatrix}$ are contractions means that $|\alpha_1|^2 \leq (1-|\omega_1|^2)(1-|\omega_2|^2)$, $\alpha_2 = \alpha_3 =0$, $|\beta_1(1-|\omega_2|^2)|^2 \leq (1-|\omega_1|^2)(1-|\omega_2|^2)-|\alpha_1|^2$ and $|\beta_2|^2 \leq (1-|\omega_2|^2)(1-|\omega_4|^2)$.
							
							Moreover, the equations $D=(\id-CC^*)^{1/2}Y$ and $A=Z(\id-C^*C)^{1/2}$ have both infinitely many solutions. Taking the minimal solutions for $Y$ and $Z$, we obtain $Y = \begin{bmatrix} \frac{\beta_2}{\sqrt{1-|\omega_2|^2}} \\ 0 \\ \omega_4 \end{bmatrix}$ and $Z = \begin{bmatrix}
								\omega_1 & \frac{\alpha_1}{\sqrt{1-|\omega_2|^2}} & 0
							\end{bmatrix}$
							
							Hence, we have $ZC^* Y = \frac{\alpha_1 \beta_2 \cj{\omega_2}}{1-|\omega_2|^2}$ and, in the end, $T$ is a contraction if and only if: 
							\[
							\left|\gamma + \frac{\alpha_1 \beta_2 \cj{\omega_2}}{1-|\omega_2|^2} \right|^2 \leq \left[ 1 - |\omega_4|^2 - \frac{|\beta_2|^2}{1- |\omega_2|^2}  \right] \left[ 1 - |\omega_1|^2 - \frac{|\alpha_1|^2}{1- |\omega_2|^2} \right],
							\]
							which is equivalent to
							\[
							\left|\gamma (1-|\omega_2|^2) + \alpha_1 \beta_2 \cj{\omega_2} \right|^2 \leq \left[ (1 - |\omega_4|^2)(1-|\omega_2|^2) - |\beta_2|^2  \right] \left[ (1 - |\omega_1|^2)(1-|\omega_2|^2) - |\alpha_1|^2 \right].
							\]
							
							\item If $|\omega_2| = 1$: 
							
							Applying the criterion for $3 \times 3$ matrices, the assumption that the compressions $\begin{bmatrix}
								\omega_1 & \alpha_1 & \beta_1 \\
								0 & \omega_2 & \alpha_2 \\
								0 & 0 & \omega_3
							\end{bmatrix}$ and $\begin{bmatrix}
								\omega_2 & \alpha_2 & \beta_2 \\
								0 & \omega_3 & \alpha_3 \\
								0 & 0 & \omega_4 
							\end{bmatrix}$ are contractions means that $\alpha_1 = \alpha_2 = \alpha_3 =0$, $|\beta_1|^2 \leq (1-|\omega_1|^2)(1-|\omega_3|^2)$ and $|\beta_2|^2 \leq (1-|\omega_2|^2)(1-|\omega_4|^2)$.
							Moreover, taking the minimal solutions for $Y$ and $Z$, we obtain $Y = \begin{bmatrix} 0 \\ 0 \\ \omega_4 \end{bmatrix}$ and $Z = \begin{bmatrix}
								\omega_1 & 0 & 0
							\end{bmatrix}$
							Hence, we have $ZC^* Y = 0$ and, in the end, $T$ is a contraction if and only if: 
							\begin{align*}
								\left|\gamma \right|^2 \leq (1-|\omega_1|^2)(1-|\omega_4|^2)
							\end{align*}
						\end{itemize}
						\item The case where $|\omega_2|=1$ and $|\omega_3|<1$ is similar to the case where $|\omega_3|=1$ and $|\omega_2|<1$.
					\end{itemize}
				\end{proof}

\section*{Acknowledgments}
The author thanks Catalin Badea for useful comments and encouragements. 
The author also acknowledge support from the Labex CEMPI (ANR-11-LABX-0007-01).

\printbibliography

\end{document}